\theoremstyle{change}  
\newtheorem{theorem}{Theorem}[section] 
\newtheorem{lemma}[theorem]{Lemma}  
\newtheorem{proposition}[theorem]{Proposition}
\newenvironment{proof}{\noindent{\bf Proof}\ }{\qed\bigskip}
\renewcommand{\le}{\leqslant} 
\newcommand{\Aut}{\mathrm{Aut}}
\newcommand{\calE}{\mathcal{E}}
\newcommand{\calF}{\mathcal{F}}
\newcommand{\calT}{\mathcal{T}}
\newcommand{\Ind}{\mathrm{Ind}}
\newcommand{\Out}{\mathrm{Out}}
\newcommand{\qed}{\nobreak\hfill
                  \vbox{\hrule\hbox{\vrule\hbox to 5pt
                  {\vbox to 8pt{\vfil}\hfil}\vrule}\hrule}}
\newcommand{\pperm}{T^{\Delta}_{o}}
\newcommand{\Sc}{\mathrm{Sc}}
\title{The group of splendid Morita equivalences of principal $2$-blocks with dihedral and generalised quaternion defect groups}
\author{\c{C}isil Karag{\"u}zel and Deniz Y\i lmaz}
\date{}
\providecommand{\keywords}[1]
{
  \small\smallskip\par	
  \hspace{2ex}\textbf{Keywords:} #1
}
\providecommand{\msc}[1]
{
  \small\smallskip\par	
  \hspace{2ex}\textbf{MSC2020:} #1
}
\begin{document}
\sloppy
\maketitle

\begin{abstract}
Let $k$ be an algebraically closed field of characteristic $2$, let $G$ be a finite group and let $B$ be the principal $2$-block of $kG$ with a dihedral or a generalised quaternion defect group $P$.  Let also $\calT(B)$ denote the group of splendid Morita auto-equivalences of $B$. We show that
\begin{align*}
\calT(B)\cong \Out_P(A)\rtimes \Out(P,\calF)\,,
\end{align*}
where $\Out(P,\calF)$ is the group of outer automorphisms of $P$ which stabilize the fusion system $\calF$ of $G$ on $P$ and $\Out_P(A)$ is the group of algebra automorphisms of a source algebra $A$ of $B$ fixing $P$ modulo inner automorphisms induced by $(A^P)^\times$.
\end{abstract}

\keywords{block, fusion systems, Picard groups, dihedral defect groups, generalised quaternion defect groups.}
\msc{16D90, 20C20.}

\section{Introduction}

Let $k$ be an algebraically closed field of characteristic $p>0$. Let $G$ be a finite group and let $B$ be a block of $kG$ with a maximal Brauer pair $(P,e)$. Let $\calF$ denote the fusion system of $B$ with respect to $(P,e)$. Following \cite[Definition~1.13]{AOV12}, we set $\Aut(P,\calF)$ to be the group of automorphisms of $P$ which stabilize $\calF$ and $\Out(P,\calF)=\Aut(P,\calF)/\Aut_{\calF}(P)$.  Let $\mathrm{Pic}(B)$ denote the Picard group of $B$.  If $M$ is a bimodule inducing a Morita equivalence of $B$, we denote by $[M]$ its isomorphism class in $\mathrm{Pic}(B)$. Let $\calE(B)$ and $\calT(B)$ denote the subgroups of $\mathrm{Pic}(B)$ consisting of Morita equivalences given by bimodules with an endopermutation source and trivial source, respectively. In particular, $\calT(B)$ is the group of splendid Morita (or Puig or source algebra) auto-equivalences of $B$ with product induced by the tensor product over $B$. 

Following \cite{BKL2020}, let $D_k(P,\calF)$ denote the subgroup of the Dade group $D_k(P)$ of $P$ consisting of the isomorphism classes of $\calF$-stable indecomposable endopermutation $kP$-modules. Let also $D_k^t(P,\calF)$ denote its torsion group.

Let $A$ be a source algebra of $B$ and let $\Aut_P(A)$ denote the group of algebra automorphims of $A$ which fix the image of $P$ in $A$ elementwise. Let also $\Out_P(A)$ denote the quotient of $\Aut_P(A)$ by the subgroup of inner automorphisms induced by conjugation with the elements of $(A^P)^\times$.

Let $[M]\in\calE(B)$. Then by \cite[7.4, 7.6]{Puig1999}, $M$ has a vertex of the form $\Delta(P,\alpha,P)$ for some $\alpha\in\Aut(P,\calF)$ as a $k(G\times G)$-module.  Let an endopermutation $k[\Delta(P,\alpha,P)]$-module $V$ be a source of $M$.  Boltje, Kessar and Linckelmann show in \cite{BKL2020} that the map sending the class $[M]$ to the pair $(V,\alpha)$ induces a group homomorphism $\Phi$ making the diagram
\begin{equation}
\begin{tikzcd}
1\ar[r]&\Out_P(A)\ar[r]\arrow[equal]{d}&\calE(B) \arrow[rightarrow]{r}{\Phi}&D_k(P,\calF)\rtimes\Out(P,\calF)\\
1\ar[r]&\Out_P(A)\ar[r]&\calT(B)\ar[u]\arrow[rightarrow]{r}{\Phi}&\Out(P,\calF)\ar[u]
\end{tikzcd}
\end{equation}
with exact rows commute where upward maps are inclusions.  They also show in \cite{BKL2020} that if the map $\Phi$ maps $\calT(B)$ onto $\Out(P,\calF)$, then $\Phi$ maps $\calE(B)$ to $D_k^t(P,\calF)\rtimes\Out(P,\calF)$, see \cite[Theorem~1.1]{BKL2020}. Hence they raise the question of the surjectivity of the map
\begin{equation}
\Phi:\calT(B)\to \Out(P,\calF)
\end{equation}
and also give examples of blocks for which $\Phi$ is not surjective (see \cite[Section~7]{BKL2020}). Our aim in this paper is to show that this map is split surjective for principal $2$-blocks with dihedral and generalised quaternion defect groups.
\begin{theorem}\label{thm mainthm}
Let $k$ be an algebraically closed field of characteristic $2$. Let $G$ be a finite group with a dihedral or generalised quaternion Sylow $2$-subgroup $P$,  let $B$ be the principal block of $kG$ and let $\calF=\calF_P(G)$. Then for any $\alpha\in\Aut(P,\calF)$, the Scott module $\mathrm{Sc}(G\times G,\Delta(P,\alpha,P))$ induces a splendid Morita auto-equivalence of $B$. In particular, the map
\begin{align*}
\Phi : \pperm(B,B) \rightarrow \mathrm{Out}(P,\calF)
\end{align*}
is split surjective and one has
\begin{align*}
\calT(B)\cong \Out_P(A)\rtimes \Out(P,\calF)\,.
\end{align*}
\end{theorem}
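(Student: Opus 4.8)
The plan is to deduce the whole statement from the single assertion that, for every $\alpha\in\Aut(P,\calF)$, the Scott module $M_\alpha:=\Sc(G\times G,\Delta(P,\alpha,P))$ induces a splendid Morita auto-equivalence of $B$. Granting this, the semidirect product decomposition is formal. A Scott module always lies in the principal block of $k(G\times G)\cong B\otimes B$, so $M_\alpha$ is a trivial source $(B,B)$-bimodule, and once it is invertible we have $[M_\alpha]\in\calT(B)$ with $\Phi([M_\alpha])=\overline{\alpha}\in\Out(P,\calF)$ by the definition of $\Phi$ in diagram~(1). One first checks that $\Delta(P,\alpha c,P)$ is $(G\times G)$-conjugate to $\Delta(P,\alpha,P)$ for every $c\in\Aut_\calF(P)=N_G(P)/C_G(P)$, so that $\overline{\alpha}\mapsto[M_\alpha]$ is a well-defined map $\Out(P,\calF)\to\calT(B)$; and it is a group homomorphism because $M_\alpha\otimes_B M_\beta$ is a trivial source bimodule (by Mackey) inducing a Morita equivalence $B\to B$, hence indecomposable (its endomorphism ring being the local ring $Z(B)$), with vertex $\Delta(P,\alpha\beta,P)$, and sending the trivial module to the trivial module, so that $M_\alpha\otimes_B M_\beta\cong M_{\alpha\beta}$ by the defining property of the Scott module. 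Thus $\overline{\alpha}\mapsto[M_\alpha]$ splits $\Phi$ on $\calT(B)$, hence on $\pperm(B,B)$, and the bottom exact row of~(1) gives $\calT(B)\cong\Out_P(A)\rtimes\Out(P,\calF)$; feeding the surjectivity into \cite[Theorem~1.1]{BKL2020} simultaneously settles the $\calE(B)$ statement.

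To show that $M_\alpha$ is invertible we would induct on $|G|$, using the standard criterion expressing a splendid Morita equivalence through its Brauer constructions: $M_\alpha$ induces a splendid Morita equivalence of $B$ provided $M_\alpha$ is Brauer indecomposable — so that $\Br_{\Delta(Q,\alpha|_Q,\alpha(Q))}(M_\alpha)$ is the Scott module $\Sc\bigl(N_G(\alpha(Q))\times N_G(Q),\Delta(N_P(\alpha(Q)),\alpha,N_P(Q))\bigr)$ for each $Q\le P$ — and provided each of these local Scott modules induces a splendid Morita equivalence between the principal blocks of $kN_G(\alpha(Q))$ and $kN_G(Q)$, which are the Brauer correspondents of $B$ by Brauer's third main theorem. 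Since $\alpha$ stabilises $\calF$, it fixes $Z(P)$ and induces isomorphisms between the fusion systems of these normalisers, so when $O_2(G)=1$ (equivalently, no nontrivial subgroup of $P$ is normal in $G$) every $N_G(Q)$ with $Q\ne 1$ is a proper subgroup and the inductive hypothesis applies; moreover, for $1\ne Q\le P$ the group $N_P(Q)$ is again dihedral, Klein four, or generalised quaternion — never cyclic — so the induction remains within the class of $2$-groups under consideration (the Klein four case being classically understood). The base case is $O_2(G)\ne 1$: if $\calF$ is nilpotent then $G$ has a normal $2$-complement, $B\cong kP$, and $M_\alpha$ is just $kP$ twisted on one side by $\alpha$, which is visibly invertible; the remaining fusion systems with $1\ne O_2(\calF)\ne P$ form a short explicit list (such as $\calF_{D_8}(S_4)$, $\calF_{Q_8}(\mathrm{SL}_2(3))$ and their analogues) and are handled directly. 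Finally, the Brauer-construction argument first yields only a splendid stable equivalence of Morita type; one upgrades it to a Morita equivalence by noting that $N_G(P)$ has a normal $2$-complement, so that $\Br_{\Delta P}(M_\alpha)$ is genuinely invertible and the projective summand in $M_\alpha\otimes_B M_\alpha^{*}\cong B\oplus(\text{projective})$ must vanish.

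The one genuinely delicate point, and the one I expect to be the main obstacle, is the Brauer indecomposability of $M_\alpha=\Sc(G\times G,\Delta(P,\alpha,P))$ — this is where the specific geometry of dihedral and generalised quaternion $2$-groups is needed. The plan here is to use the classification of saturated fusion systems on such groups: apart from the nilpotent and constrained cases already separated off, the essential subgroups of $\calF$ are Klein four subgroups (in the dihedral case) or copies of $Q_8$ (in the generalised quaternion case), each with $\calF$-automorphism group $S_3$ and all lying in a single $\calF$-conjugacy class. For such an essential $Q$ one must show that $\Br_{\Delta Q}(M_\alpha)$ does not decompose as a proper direct sum; passing to $N_G(Q)/Q$, this reduces to a computation with the $S_3$-action on $Q$ and the trivial source modules of the relevant local groups, and the rigidity of the situation — essentially one class of essential subgroups to control in each case — keeps the Scott module indecomposable under the Brauer construction, the twist by $\alpha$ playing no role since $\alpha$ stabilises $\calF$ and therefore matches the fusion of source and target. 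With Brauer indecomposability established, the induction of the previous paragraph closes, $M_\alpha$ is invertible, and the theorem follows.
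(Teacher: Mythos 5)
Your first paragraph (deducing well-definedness and multiplicativity of $\overline{\alpha}\mapsto[M_\alpha]$, hence the splitting and the semidirect product, from the single claim that each $M_\alpha$ is invertible) is fine and matches what the paper leaves implicit. The problems are in the core of the argument, where $M_\alpha$ is shown to be invertible. First, the ``standard criterion'' you invoke is misstated in two ways. The gluing theorem (Brou\'e/Linckelmann, in the form of \cite[Lemma~4.1]{KoshitaniLassueur2020A}) produces only a \emph{stable} equivalence of Morita type, and its hypotheses concern the Brauer constructions at \emph{centralizers} $C_G(Q)$ of nontrivial (in fact order-$p$) subgroups, not normalizers; moreover, when $M_\alpha$ is Brauer indecomposable and $\Delta(\alpha(Q),\alpha,Q)$ is fully normalized, Ishioka--Kunugi identifies $\Br_{\Delta(\alpha(Q),\alpha,Q)}(M_\alpha)$ as the Scott module for $N_{G\times G}(\Delta(\alpha(Q),\alpha,Q))$, which is a twisted-diagonal type subgroup of $N_G(\alpha(Q))\times N_G(Q)$ and not that full direct product (only for $|Q|=2$ does it become $C_G(\alpha(t))\times C_G(t)$, which is exactly how the paper uses it). Second, your upgrade from stable to Morita equivalence is invalid: invertibility of $\Br_{\Delta P}(M_\alpha)$ over the Brauer correspondent does not force the projective summand of $M_\alpha\otimes_B M_\alpha^{*}\cong B\oplus(\mathrm{proj})$ to vanish --- otherwise every stable equivalence of Morita type with good local behaviour would be a Morita equivalence, which is false (a block and its Brauer correspondent are typically only stably equivalent). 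The genuine step, which the paper takes over from \cite[Section~5]{KoshitaniLassueur2020A} via its Lemma~\ref{lem stableMoritaimplications}, is to show that the stable equivalence sends simple modules to simple modules (using images of Scott modules and Heller translates, and the known module theory of tame blocks) and then to apply Linckelmann's theorem.

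The generalised quaternion case is a second genuine gap. For $P$ generalised quaternion every nontrivial subgroup contains the unique involution $z$ of $P$, which is therefore $\calF$-central; after factoring out $O_{2'}(G)$ (harmless for the principal block) the involution is central in $G$ by Brauer--Suzuki, so $O_2(G)\neq 1$ and the \emph{entire} quaternion case lands in your ``base case'', which you do not actually treat (the short list of constrained fusion systems you mention does not cover, e.g., the quasisimple groups with quaternion Sylow $2$-subgroups). The paper instead reduces to the groups listed in \cite[Theorem~1.1]{KoshitaniLassueur2020B}, passes to $\overline{G}=G/Z$ with dihedral Sylow $2$-subgroup $\overline{P}=P/Z$, identifies $\overline{M}$ with $\Sc(\overline{G}\times\overline{G},\Delta(\overline{P},\overline{\alpha},\overline{P}))$, and lifts the equivalence through Rouquier's \cite[Lemma~10.2.11]{Rouquier98}; no direct local induction is attempted, and indeed none is available since the local subgroups at $\langle z\rangle$ are all of $G$. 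Relatedly, your induction on $|G|$ and your sketch of Brauer indecomposability via essential subgroups replace, but do not supply, the two inputs the paper actually relies on: the reduction to the explicit Koshitani--Lassueur lists of groups (whose involution centralizers are $2$-nilpotent, giving Brauer indecomposability by \cite[Theorem~1.3]{KoshitaniLassueur2020A} together with Lemma~\ref{lem fusionlemma}), and the $p$-nilpotent local Morita equivalences of Lemma~\ref{lem p-nilpotent}. As it stands the proposal would not close.
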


To prove our main theorem, it suffices to consider the groups $G$ listed in  \cite[Theorem~1.1]{KoshitaniLassueur2020A} for dihedral defect group case, and in  \cite[Theorem~1.1]{KoshitaniLassueur2020B} for generalised quaternion defect group case.  Koshitani and Lassueur show in particular that if $G$ is one of these groups and if $B$ is the principal block of $kG$ with a defect group $P$, then the Scott module $\Sc(G\times G,\Delta(P))$ induces a splendid Morita auto-equivalence of $B$. We will follow the blueprint of \cite{KoshitaniLassueur2020A} and \cite{KoshitaniLassueur2020B} to show that actually the Scott module $\Sc(G\times G,\Delta(P,\alpha,P))$, for any $\alpha\in\Aut(P,F)$ induces a splendid Morita auto-equivalence of $B$. The fact that the map $\alpha$ may not be the identity map makes some of the preliminary results and proofs more technical. We will try to write as much as possible to explain these technicalities clearly and as little as possible not to repeat the proofs in \cite{KoshitaniLassueur2020A} and \cite{KoshitaniLassueur2020B}.

\section{Proof of Main Theorem}

In this section we prove Theorem~\ref{thm mainthm}. Let $k$ be an algebraically closed field of characteristic $p>0$.

\begin{lemma}\label{lem fusionlemma}
Let $G$ and $G'$ be two finite groups with a common Sylow $p$-subgroup $P$, and assume that $\calF:=\calF_P(G)=\calF_P(G')$. Let $\alpha\in\Aut(P,\calF)$. Then $\calF_{\Delta(P,\alpha,P)}(G\times G') \cong \calF$. In particular, $\calF_{\Delta(P,\alpha,P)}(G\times G')$ is saturated. 
\end{lemma}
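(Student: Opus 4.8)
The plan is to transport the fusion system $\calF$ along the canonical group isomorphism $\delta\colon P\liso\Delta(P,\alpha,P)$, $u\mapsto(\alpha(u),u)$, and to verify morphism by morphism that the resulting fusion system on $\Delta(P,\alpha,P)$ coincides with $\calF_{\Delta(P,\alpha,P)}(G\times G')$. Since $\delta$ sends a subgroup $Q\le P$ to $\delta(Q)=\{(\alpha(u),u):u\in Q\}$, this reduces to matching, for all $Q,R\le P$, the morphisms $Q\to R$ in $\calF$ with the morphisms $\delta(Q)\to\delta(R)$ in $\calF_{\Delta(P,\alpha,P)}(G\times G')$. Throughout I write $c_x$ for the conjugation map $y\mapsto xyx^{-1}$.

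First I would unwind the $\Delta$-side. A morphism $\delta(Q)\to\delta(R)$ in $\calF_{\Delta(P,\alpha,P)}(G\times G')$ is $c_{(g,g')}$ for some $(g,g')\in G\times G'$ with $c_{(g,g')}(\delta(Q))\le\delta(R)$. Computing $c_{(g,g')}(\delta(Q))=\{(c_g(\alpha(u)),c_{g'}(u)):u\in Q\}$ and imposing that this lie in $\Delta(P,\alpha,P)$ forces $c_{g'}(u)\in P$ for all $u\in Q$ --- so that $c_{g'}|_Q$ is a morphism in $\calF_P(G')=\calF$ --- together with $c_g(\alpha(u))=\alpha(c_{g'}(u))$ for all $u\in Q$, that is, $c_g|_{\alpha(Q)}=\alpha\circ c_{g'}|_Q\circ\alpha^{-1}$. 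In that case $c_{(g,g')}$ sends $(\alpha(u),u)\mapsto(\alpha(c_{g'}(u)),c_{g'}(u))=\delta(c_{g'}(u))$, so under $\delta$ it corresponds exactly to $c_{g'}\colon Q\to R$, a morphism of $\calF$. This yields one inclusion.

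For the converse, start from a morphism $\psi\colon Q\to R$ in $\calF=\calF_P(G')$ and choose $g'\in G'$ with $c_{g'}|_Q=\psi$. Since $\alpha\in\Aut(P,\calF)$ stabilizes $\calF$, the conjugate $\alpha\psi\alpha^{-1}\colon\alpha(Q)\to\alpha(R)$ is again a morphism of $\calF=\calF_P(G)$, so there is $g\in G$ with $c_g|_{\alpha(Q)}=\alpha\psi\alpha^{-1}$; then $(g,g')$ satisfies the conditions above and $c_{(g,g')}$ induces $\delta\psi\delta^{-1}$ on $\delta(Q)$. Hence $\delta$ identifies the two fusion systems, giving $\calF_{\Delta(P,\alpha,P)}(G\times G')\cong\calF$; saturation of the left-hand side is then automatic, since $\calF=\calF_P(G)$ is saturated and being saturated is preserved under isomorphism of fusion systems. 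The only subtle point --- the crux --- is the use of the hypothesis $\alpha\in\Aut(P,\calF)$: one must invoke its defining property from \cite{AOV12} in the form $\alpha\calF\alpha^{-1}=\calF$ (equivalently, $\psi\mapsto\alpha\psi\alpha^{-1}$ maps $\Hom_{\calF}(Q,R)$ bijectively onto $\Hom_{\calF}(\alpha(Q),\alpha(R))$), and to combine it with the hypothesis $\calF_P(G)=\calF_P(G')$ so that the two morphisms $\psi$ and $\alpha\psi\alpha^{-1}$ --- a priori realized by conjugation in $G'$ and in $G$ respectively --- can both be read off as genuine group conjugations in the two factors; everything else is a routine computation with $\Delta$-subgroups.
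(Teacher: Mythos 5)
Your proof is correct and follows essentially the same route as the paper: the paper's proof simply exhibits the correspondence $Q\mapsto\Delta(\alpha(Q),\alpha,Q)$, $i_g\mapsto(i_{g'},i_g)$ with $g'$ chosen via the $\calF$-stability of $\alpha$ and the equality $\calF_P(G)=\calF_P(G')$, which is exactly your transport along $\delta\colon u\mapsto(\alpha(u),u)$, with the two inclusions you verify left implicit there. Your write-up just makes the routine verification explicit.
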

\begin{proof}
One shows that the map
\begin{align*}
\calF&\to \calF_{\Delta(P,\alpha,P)}(G\times G')\\
Q&\mapsto \Delta(\alpha(Q),\alpha,Q)\\
(i_g:Q\to R)&\mapsto \left((i_{g'},i_g): \Delta(\alpha(Q),\alpha,Q)\to \Delta(\alpha(R),\alpha,R)\right)
\end{align*}
where $g'\in G$ with $\alpha\circ i_g\alpha^{-1}=i_{g'}:\alpha(Q)\to\alpha(R)$ is an isomorphism. 
\end{proof}

\begin{lemma}\label{lem p-nilpotent}
Let $G$ and $G'$ be finite $p$-nilpotent groups with a common Sylow $p$-subgroup $P$. Let $\alpha\in\Aut(P)$. Then $\Sc(G\times G', \Delta(P,\alpha,P))$ induces a Morita equivalence between $B_0(kG)$ and $B_0(kG')$.
\end{lemma}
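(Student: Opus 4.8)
The plan is to reduce to the case $\alpha=\id_P$ and then invoke the classical structure of the principal block of a $p$-nilpotent group. Write $G=N\rtimes P$ and $G'=N'\rtimes P$ with $N=O_{p'}(G)$, $N'=O_{p'}(G')$, and let $\rho\colon P\to\Aut(N')$ be the conjugation action. Set $G''=N'\rtimes P$ with $P$ acting through $\rho\circ\alpha^{-1}$; then $G''$ is again $p$-nilpotent with Sylow $p$-subgroup $P$, and $(x,u)\mapsto(x,\alpha(u))$ is a group isomorphism $g\colon G'\xrightarrow{\ \sim\ }G''$ restricting to $\alpha$ on $P$. Hence $\theta:=\id_G\times g\colon G\times G'\xrightarrow{\ \sim\ }G\times G''$ carries $\Delta(P,\alpha,P)$ onto the ordinary diagonal $\Delta(P)\le G\times G''$. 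Since twisting a module by $\theta$ sends $\Sc(G\times G'',\Delta(P))$ to $\Sc(G\times G',\Delta(P,\alpha,P))$ and transports Morita equivalences between principal blocks, it suffices to treat the case $\alpha=\id_P$: for $p$-nilpotent groups $G$ and $H$ with a common Sylow $p$-subgroup $P$, the module $\Sc(G\times H,\Delta(P))$ induces a Morita equivalence between $B_0(kG)$ and $B_0(kH)$.

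To prove this, recall the classical fact that for a $p$-nilpotent group $G$ the inclusion $kP\hookrightarrow kG$ induces an isomorphism $kP\xrightarrow{\ \sim\ }B_0(kG)$ of interior $P$-algebras; thus $B_0(kG)$ and $B_0(kP)=kP$ have the same source algebra $kP$ and are splendidly Morita equivalent, the equivalence being realised by the trivial source $k[G\times P]$-module $\Sc(G\times P,\Delta(P))$, whose vertex is $\Delta(P)$. Applying this to $G$ and to $H$ and composing, we obtain a splendid Morita equivalence between $B_0(kG)$ and $B_0(kH)$ realised by the bimodule $M_0:=\Sc(G\times P,\Delta(P))\otimes_{kP}\Sc(P\times H,\Delta(P))$. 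Here $M_0$ is a $p$-permutation $k[G\times H]$-module of trivial source; it is indecomposable, since a nontrivial direct sum decomposition of an equivalence bimodule would force a decomposition of the block $B_0(kG)$ as a bimodule over itself, which is impossible; and its vertex, being the diagonal subgroup determined by the fusion isomorphism of the composed equivalence (the composite of two copies of $\id_P$), equals $\Delta(P)$.

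It remains to identify $M_0$ with $\Sc(G\times H,\Delta(P))$. Being of trivial source with vertex $\Delta(P)$, the module $M_0$ is a direct summand of $\Ind_{\Delta(P)}^{G\times H}k$. Since $H$ is $p$-nilpotent, $B_0(kH)$ has a unique simple module, the trivial one, so under the Morita equivalence the trivial $kG$-module corresponds to the trivial $kH$-module; therefore $\Hom_{k[G\times H]}(M_0,k)\cong\Hom_{kH}(k\otimes_{kG}M_0,k)\neq 0$, i.e.\ the trivial module lies in the top of $M_0$. By definition an indecomposable summand of $\Ind_{\Delta(P)}^{G\times H}k$ with the trivial module in its top is $\Sc(G\times H,\Delta(P))$, whence $M_0\cong\Sc(G\times H,\Delta(P))$; unwinding $\theta$ completes the proof.

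The step I expect to require the most care is the one controlling $M_0$ in the second paragraph: one must make precise the classical statement that the bimodule realising the splendid equivalence for a $p$-nilpotent principal block is exactly the Scott module $\Sc(G\times P,\Delta(P))$, and one must check that tensoring two such $p$-permutation bimodules over $kP$ yields an indecomposable trivial source bimodule with vertex \emph{exactly} $\Delta(P)$ rather than a proper diagonal subgroup. The rest is formal.
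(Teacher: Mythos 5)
Your proof is correct, but it takes a genuinely different route from the paper's. The paper works directly with the bimodule $N:=1_{B}\cdot\Ind^{G\times G'}_{\Delta(P,\alpha,P)}k\cdot 1_{B'}$, of which the Scott module is a direct summand: using the isomorphism $B_0\cong kP$ for $p$-nilpotent groups, it computes $N\otimes_{B'}N^{o}\cong B$ and $N^{o}\otimes_{B}N\cong B'$ (the twist cancels because $\Delta(P,\alpha,P)$ composed with $\Delta(P,\alpha^{-1},P)$ collapses to $\Delta(P)$), so $N$ is invertible, hence indecomposable, hence equal to $\Sc(G\times G',\Delta(P,\alpha,P))$. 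You instead untwist $\alpha$ first, via the splitting $G'=O_{p'}(G')\rtimes P$ and the isomorphism onto the twisted semidirect product --- a reduction that is special to the $p$-nilpotent situation but entirely valid here --- and then compose the two classical equivalences $B_0(kG)\simeq kP\simeq B_0(kH)$, identifying the composite bimodule with $\Sc(G\times H,\Delta(P))$ by indecomposability together with the trivial module in its head. Both arguments ultimately rest on the same classical fact $kP\cong B_0$; the paper's computation buys a uniform treatment of arbitrary $\alpha$ with no appeal to the structure of the nilpotent-block equivalence bimodule, while yours buys a conceptual reduction to $\alpha=\id$ and reuses standard Scott-module characterizations. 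Two small remarks on the points you flag: the identification of the equivalence bimodule for $B_0(kG)\simeq kP$ with $\Sc(G\times P,\Delta(P))$ is immediate from $B_0(kG)=kGe$ with $e=|O_{p'}(G)|^{-1}\sum_{n\in O_{p'}(G)}n$, since $kGe$ is an indecomposable $(kG,kP)$-summand of $kG\cong\Ind_{\Delta(P)}^{G\times P}k$ admitting the augmentation onto the trivial module; and you do not actually need the vertex of $M_0$ to be exactly $\Delta(P)$, because each factor divides $kG$ (resp.\ $kH$) as a bimodule, so $M_0$ divides $kG\otimes_{kP}kH\cong\Ind_{\Delta(P)}^{G\times H}k$, which is all the Scott-module characterization requires (exactness of the vertex then comes for free, or from Puig's theorem as in the paper's introduction).
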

\begin{proof}
Let $M:=\Sc(G\times G', \Delta(P,\alpha,P))$, $B:=B_0(kG)$ and $B':=B_0(kG')$. By definition of Scott modules we have
\begin{align*}
M\mid 1_B\cdot \Ind^{G\times G'}_{\Delta(P,\alpha,P)}k\cdot 1_{B'}=B\otimes_{kG}\Ind^{G\times G'}_{\Delta(P,\alpha,P)}k\otimes_{kG'}B'=:N\,.
\end{align*}
One has
\begin{align*}
N\otimes_{B'}N^o&=\left(B\otimes_{kG}\Ind^{G\times G'}_{\Delta(P,\alpha,P)}k\otimes_{kG'}B'\right)\otimes_{B'}\left(B'\otimes_{kG'}\Ind^{G'\times G}_{\Delta(P,\alpha^{-1},P)}k\otimes_{kG}B\right)\\&
\cong B\otimes_{kG}\Ind^{G\times G'}_{\Delta(P,\alpha,P)}k\otimes_{kG'}B'\otimes_{kG'}\Ind^{G'\times G}_{\Delta(P,\alpha^{-1},P)}k\otimes_{kG}B\\&
\cong B\otimes_{kG}\Ind^{G\times G'}_{\Delta(P,\alpha,P)}k\otimes_{kG'}kP\otimes_{kG'}\Ind^{G'\times G}_{\Delta(P,\alpha^{-1},P)}k\otimes_{kG}B\\&
\cong B\otimes_{kG}\Ind^{G\times G}_{\Delta(P)}k\otimes_{kG}B\\&
\cong B\otimes_{kP} B\\&
\cong B
\end{align*}
as $(kG,kG)$-bimodules. Similar calculations show that $N^o\otimes_B N\cong B'$. This shows that the bimodule $N$ induces a splendid Morita equivalence between $B$ and $B'$.  In particular, $N$ is indecomposable and hence $M=N$. The result follows. 
\end{proof}

The following is a slight generalization of \cite[Lemma~3.4]{KoshitaniLassueur2020A}.
\begin{lemma}\label{lem stableMoritaimplications}
Let $G$ and $G'$ be finite groups with a common Sylow $p$-subgroup $P$ such that $\calF=\calF_P(G)=\calF_P(G')$.  Let $\alpha\in\Aut(P,\calF)$ and set $M :=\Sc(G\times G',\Delta(P,\alpha,P))$, $B := B_0(kG)$ and $B' := B_0(kG')$. If $M$ induces a stable equivalence of Morita type between $B$ and $B'$, then the following holds:

\smallskip
{\rm (a)} $k_G\otimes_B M =k_{G'}$

\smallskip
{\rm (b)} If $U$ is an indecomposable $p$-permutation $kG$-module with vertex $1\neq Q\le P$, then $U\otimes_B M$ has, up to isomorphism, a unique indecomposable direct summand $V$ with vertex $Q$, and again $V$ is a $p$-permutation module.

\smallskip
{\rm (c)} For any $Q\le P$, $\Sc(G,Q)\otimes_B M=\Sc(G',Q)\oplus \mathrm{proj}$

\smallskip
{\rm (d)} For any $Q\le P$, $\Omega_Q(k_G)\otimes_B M=\Omega_Q(k_{G'})\oplus \mathrm{proj}$
\end{lemma}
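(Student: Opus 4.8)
The plan is to reduce all four statements to the corresponding assertions in \cite[Lemma~3.4]{KoshitaniLassueur2020A}, whose proof only uses that $M$ is a $p$-permutation bimodule with twisted diagonal vertex inducing a stable equivalence of Morita type between the principal blocks, together with standard properties of the Brauer construction and Scott modules. The only new feature here is the presence of the automorphism $\alpha$ in the vertex $\Delta(P,\alpha,P)$ instead of the diagonal $\Delta(P)$; so the main task is to check that $\alpha$ does not disturb the arguments, and where it does, to track it through the computations. First I would record that, since $\calF_P(G)=\calF_P(G')=\calF$ and $\alpha\in\Aut(P,\calF)$, Lemma~\ref{lem fusionlemma} gives $\calF_{\Delta(P,\alpha,P)}(G\times G')\cong\calF$; in particular the Brauer pairs of $M$ are governed by $\calF$ on both sides, which is exactly the input needed for the Brauer-construction bookkeeping below.

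For part (a): $k_G\otimes_B M$ is a $kG'$-module, and since $M$ induces a stable equivalence it is nonzero and its nonprojective part is indecomposable; computing its Brauer quotient at $P$ via $M(\Delta P)$ — which by the vertex hypothesis and Lemma~\ref{lem fusionlemma} is a $k[P\times P]$-permutation module with the same structure as in the $\alpha=\id$ case after transporting along $\alpha$ — shows $(k_G\otimes_B M)(P)\ne 0$, forcing the trivial module $k_{G'}$ (the unique nonprojective summand with full vertex) to appear, and a dimension/defect count rules out any further summand. For part (b): given an indecomposable $p$-permutation $kG$-module $U$ with vertex $Q\ne 1$, the bimodule $U\otimes_B M$ is again $p$-permutation (tensor of $p$-permutation modules over the principal block with a $p$-permutation bimodule), and one uses the standard vertex-source argument for stable equivalences of Morita type of $p$-permutation bimodules with twisted diagonal vertices: the Brauer construction $(U\otimes_B M)(Q)$ is nonzero, $M(\Delta Q)$ is (a Morita bimodule between) $kC_G(Q)$- and $kC_{G'}(\alpha(Q))$-type pieces, and exactly one indecomposable summand $V$ with vertex $Q$ survives, the rest being relatively $Q'$-projective for proper $Q'<Q$; here $\alpha$ enters only by replacing $Q$ with $\alpha(Q)$ on the $G'$-side, which is harmless since $\alpha(Q)\le P$ is $\calF$-conjugate to $Q$.

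Parts (c) and (d) then follow formally. For (c), $\Sc(G,Q)$ is the unique indecomposable $p$-permutation $kG$-summand of $\Ind_Q^G k$ with vertex $Q$ containing $k_G$ in its top/socle; applying (a) and (b) to $U=\Sc(G,Q)$ and using that $\Sc(G,Q)\otimes_B M$ contains $k_{G'}$ (by (a), since $k_G$ is a summand-related constituent) and has a unique vertex-$Q$ summand (by (b)) which must therefore be $\Sc(G',Q)$, identifies the nonprojective part; the leftover is projective because all other vertices are proper and the block is principal. For (d), since $\Omega_Q(k_G)$ is, up to projectives, an indecomposable $p$-permutation module with vertex $Q$ and $\Omega$ commutes with stable equivalences of Morita type ($M\otimes_B-$ on the stable category), one gets $\Omega_Q(k_G)\otimes_B M \cong \Omega_Q(k_G\otimes_B M)\cong\Omega_Q(k_{G'})$ in the stable category, i.e. up to projective summands, using (a). The main obstacle I anticipate is purely notational: keeping straight which subgroup ($Q$ versus $\alpha(Q)$) and which centralizer data appear on the $G$- versus $G'$-side when unwinding the Brauer constructions of $M$, and verifying that the uniqueness statements in \cite[Lemma~3.4]{KoshitaniLassueur2020A} survive this relabeling; none of this requires new ideas beyond Lemma~\ref{lem fusionlemma}, but it must be done carefully to justify the phrase ``slight generalization.''
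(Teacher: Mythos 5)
Your overall plan---reduce everything to \cite[Lemma~3.4]{KoshitaniLassueur2020A} and track the effect of $\alpha$---is indeed the paper's plan, but the shortcuts you propose do not work, and the one genuinely new ingredient of the paper's proof is missing. For (a), nonvanishing of a Brauer quotient at $P$ would only produce \emph{some} indecomposable trivial source summand of $k_G\otimes_B M$ with vertex $P$; it does not identify that summand as $k_{G'}$, since a principal block can contain several pairwise non-isomorphic trivial source modules with full vertex (already the principal block of $kA_4$ at $p=2$ has three), and the concluding ``dimension/defect count'' is not an argument and in particular does not exclude projective summands. The paper instead proves the twisted analogue of \cite[Lemmas~3.2--3.3]{KoshitaniLassueur2020A}: for $Q\le P$ one has $\Sc(G',Q)\mid k_G\otimes_{kG}M$ if and only if $\Sc(G\times G',\Delta(\alpha(Q),\alpha,Q))\mid M$, the key computation being $k\otimes_{kG}\Ind_{\Delta(\alpha(Q),\alpha,Q)}^{G\times G'}k\cong\Ind_Q^{G'}k$. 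Taking $Q=P$, the second condition holds trivially because $M$ \emph{is} that Scott module, so $k_{G'}=\Sc(G',P)$ is a direct summand of $k_G\otimes_{kG}M$, and equality (with no projective summands) follows from Linckelmann's indecomposability theorem \cite[Theorem~2.1]{KoshitaniLassueur2020A} applied to the indecomposable bimodule $M$. This Scott-module correspondence is exactly what your sketch would have to supply and does not.

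Two further steps rest on false claims. In (b) you assert that $\alpha(Q)$ is $\calF$-conjugate to $Q$ because $\alpha\in\Aut(P,\calF)$; stabilizing $\calF$ means $\alpha\calF\alpha^{-1}=\calF$ and does \emph{not} force $\alpha$ to preserve $\calF$-conjugacy classes of subgroups (already for $G=G'=P$ one has $\Aut(P,\calF_P(P))=\Aut(P)$, and automorphisms of a dihedral or abelian $p$-group can move subgroups to non-conjugate ones). So precisely the point you declare ``harmless''---how $\alpha$ interacts with vertices and with the Brauer constructions $M(\Delta(\alpha(Q),\alpha,Q))$, whose two factors are $C_G(\alpha(Q))$ on the $G$-side and $C_{G'}(Q)$ on the $G'$-side, not the other way around as in your sketch---is the technical content that must be checked when adapting \cite[Lemma~3.4]{KoshitaniLassueur2020A}, and your justification for it is wrong. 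Finally, in (d) the relative syzygy $\Omega_Q(k_G)$ is in general not a $p$-permutation module, and $\Omega_Q$ is not the ordinary Heller operator, so neither part (b) nor ``$\Omega$ commutes with stable equivalences of Morita type'' applies in the way you use them; as in \cite[Lemma~3.4]{KoshitaniLassueur2020A}, one has to argue via the defining short exact sequence of $\Omega_Q(k_G)$, the exactness of $-\otimes_B M$, and parts (a) and (c).
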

\begin{proof}
First of all, similar to \cite[Lemma~3.3]{KoshitaniLassueur2020A}, one shows that for an indecomposable $\Delta(P,\alpha,P)$-projective $p$-permutation $k(G\times G')$-module $M$ and a subgroup $Q$ of $P$ the following conditions are equivalent:

\smallskip
{\rm (i)} $\Sc(G',Q) \mid k_G\otimes_{kG} M$.

\smallskip
{\rm (ii)} $\Sc(G\times G',\Delta(\alpha(Q),\alpha,Q))\mid M$.

Indeed, one shows that 
\begin{align*}
k\otimes_{kG}\Ind_{\Delta(\alpha(Q),\alpha,Q)}^{G\times G'}k\cong \Ind_{1\times G}^{1\times G}k\otimes_{kG}\Ind_{\Delta(\alpha(Q),\alpha,Q)}^{G\times G'}k\cong \Ind_Q^{G'}k
\end{align*}
and the rest of the proof is similar to the proof of \cite[Lemma~3.2]{KoshitaniLassueur2020A}. Now part {\rm (a)} follows from the equivalence of {\rm (i)} and {\rm (ii)} applied to the case $Q=P$ and \cite[Theorem~2.1]{KoshitaniLassueur2020A}.  Proofs of parts {\rm (b)-(d)} are similar to the proofs of parts {\rm (b)-(d)} in \cite[Lemma~3.4]{KoshitaniLassueur2020A}.
\end{proof}

In what follows, we assume that $p=2$.

\begin{proposition}\label{prop stableMorita}
Let $G$ be a finite group with a Sylow $2$-subgroup $P$ which is a dihedral group of order at least $8$ and set $\calF=\calF_P(G)$.  Let $\alpha\in\Aut(P,\calF)$. Then the Scott module $\Sc(G\times G,\Delta(P,\alpha,P))$ induces a stable Morita auto-equivalence of $B_0(kG)$.
\end{proposition}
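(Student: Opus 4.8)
The plan is to follow the blueprint of \cite{KoshitaniLassueur2020A}: reduce, via Brou\'e's gluing criterion for stable equivalences of Morita type, to a family of \emph{local} Morita equivalences between principal blocks of centralisers of involutions, and settle those by induction on $|G|$. Write $M:=\Sc(G\times G,\Delta(P,\alpha,P))$ and $B:=B_0(kG)$; by Lemma~\ref{lem fusionlemma} the fusion system $\calF_{\Delta(P,\alpha,P)}(G\times G)\cong\calF$ is saturated, and $M$ is indecomposable and $\Delta(P,\alpha,P)$-projective. If $G$ is $p$-nilpotent, Lemma~\ref{lem p-nilpotent} already yields a Morita --- hence a stable --- auto-equivalence of $B$, so we may assume $G$ is not $p$-nilpotent and argue by induction on $|G|$, assuming Theorem~\ref{thm mainthm} for every finite group with a dihedral Sylow $2$-subgroup of order $<|G|$.

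\emph{The gluing step.} Exactly as in the proof of \cite[Lemma~4.1]{KoshitaniLassueur2020A}, but carrying $\alpha$ through the computation of Brauer constructions, one first shows that for every nontrivial subgroup $Q\le P$,
\[
M\big(\Delta(\alpha(Q),\alpha,Q)\big)\ \cong\ \Sc\big(C_G(\alpha(Q))\times C_G(Q),\ \Delta(C_P(\alpha(Q)),\,\alpha|_{C_P(Q)},\,C_P(Q))\big),
\]
using that $\alpha$ normalises $\calF$ --- so that $\alpha(Q)$ is $\calF$-conjugate to $Q$ and $\alpha$ restricts to an isomorphism $C_P(Q)\xrightarrow{\sim}C_P(\alpha(Q))$ --- and that $C_{G\times G}(\Delta(\alpha(Q),\alpha,Q))=C_G(\alpha(Q))\times C_G(Q)$. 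With this identification, the proof of \cite[Lemma~4.1]{KoshitaniLassueur2020A} goes through, so it suffices to prove that for a fully $\calF$-centralised representative $Q$ of each $\calF$-conjugacy class of subgroups of $P$ of order $2$, the displayed bimodule induces a Morita equivalence between $B_0(kC_G(\alpha(Q)))$ and $B_0(kC_G(Q))$.

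\emph{The local equivalences.} For a dihedral $P$ of order $\ge 8$ there are, up to $\calF$-conjugacy, only two cases. If $Q=\langle z\rangle$ with $z$ the central involution, then $\alpha(z)=z$ (since $Z(P)=\langle z\rangle$ is characteristic in $P$), $C_P(z)=P$, the fusion system $\calF_P(C_G(z))=C_\calF(\langle z\rangle)$ is saturated, and $\alpha\in\Aut(P,C_\calF(\langle z\rangle))$; since $G$ is not $p$-nilpotent, the Gorenstein--Walter classification forces $z\notin Z(G)$, so $C_G(z)\subsetneq G$, and the inductive hypothesis applied to $C_G(z)$ shows that $\Sc(C_G(z)\times C_G(z),\Delta(P,\alpha,P))$ induces a splendid Morita auto-equivalence of $B_0(kC_G(z))$. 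Here a Morita equivalence of $B_0(kC_G(z))$ --- not merely a stable one --- is genuinely needed, which is why the induction is run against Theorem~\ref{thm mainthm} rather than against the present proposition. If instead $Q=\langle t\rangle$ with $t$ non-central --- which occurs only when $t$ is not $\calF$-conjugate to $z$ --- then $C_P(t)\cong C_2\times C_2$ is a Sylow $2$-subgroup of $C_G(t)$, so $B_0(kC_G(t))$ and $B_0(kC_G(\alpha(t)))$ have Klein four defect group, and the required Morita equivalence for the $\alpha|_{C_P(t)}$-twisted Scott module follows from standard facts about principal blocks with Klein four defect group, exactly as for the corresponding step in \cite{KoshitaniLassueur2020A}: it is an instance of Lemma~\ref{lem p-nilpotent} when $C_G(t)$ is $p$-nilpotent, and otherwise $B_0(kC_G(t))$ is isomorphic to $kA_4$ or to $B_0(kA_5)$, with the twist induced by an automorphism.

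\emph{Main obstacle.} The hard part will be the twisted Brauer-construction identity displayed above: one must check not merely that $M(\Delta(\alpha(Q),\alpha,Q))$ is the Scott module of the product of the two centralisers, but that its vertex is the $\alpha|_{C_P(Q)}$-\emph{twisted} diagonal rather than the untwisted $\Delta(C_P(Q))$ --- this is precisely what allows the same automorphism to be re-used in the inductive hypothesis for $C_G(z)$. Keeping careful track of which $G$-conjugate $\alpha$ sends $Q$ to, and of which centraliser then sits on which side of the bimodule, is the point where $\alpha\ne\id$ genuinely costs something over \cite{KoshitaniLassueur2020A}; apart from that, the argument is a faithful, if bookkeeping-heavy, transcription of the relevant steps there.
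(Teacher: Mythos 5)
Your gluing skeleton coincides with the paper's, but the two places where the paper actually does work are exactly the places your argument leaves open. First, the identity
\[
M\big(\Delta(\alpha(Q),\alpha,Q)\big)\cong \Sc\big(C_G(\alpha(Q))\times C_G(Q),\,\Delta(C_P(\alpha(Q)),\alpha,C_P(Q))\big),
\]
which you rightly call the main obstacle, does not come from ``carrying $\alpha$ through'' the gluing lemma of \cite{KoshitaniLassueur2020A}. In the paper it is obtained by first reducing to the groups listed in \cite[Theorem~1.1]{KoshitaniLassueur2020A} (the paper's proof explicitly assumes $G$ is in that list), then checking that $C_{G\times G}(\Delta(\alpha(Q),\alpha,Q))=C_G(\alpha(Q))\times C_G(Q)$ is $2$-nilpotent for every $1\neq Q\le P$, so that \cite[Theorem~1.3]{KoshitaniLassueur2020A} gives Brauer indecomposability of $M$, and finally invoking \cite[Theorem~1.3]{IshiokaKunugi2017} at the fully normalized subgroup $\Delta(\langle\alpha(t)\rangle,\alpha,\langle t\rangle)$ in the non-central case. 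You never perform the reduction to the list and never secure the $2$-nilpotency of the relevant centralizers, so in your write-up the displayed identity, and with it the whole local analysis, remains unproved.

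Second, your handling of the central involution is both circular and unnecessary. Circular: this Proposition is an input to the paper's proof of Theorem~\ref{thm mainthm}, and that proof moreover proceeds by transporting the problem to a group in the Koshitani--Lassueur list whose order is not controlled by $|G|$; so invoking Theorem~\ref{thm mainthm} for $C_G(z)$ inside a proof of the Proposition requires a simultaneous induction whose well-foundedness you neither set up nor check. Unnecessary: $C_G(z)$ is a group with dihedral Sylow $2$-subgroup $P$ containing the involution $z$ in its center, so the very Gorenstein--Walter input you quote (trivial center modulo the odd core in the non-$2$-nilpotent cases) forces $C_G(z)$, and likewise $C_G(\alpha(z))$, to be $2$-nilpotent --- this is what the paper uses, for the groups in the list. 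Consequently the Morita equivalence you want for $\Sc(C_G(\alpha(z))\times C_G(z),\Delta(P,\alpha,P))$ is supplied directly by Lemma~\ref{lem p-nilpotent}, with no induction at all; your remark that a genuine Morita equivalence of $B_0(kC_G(z))$ ``is why the induction is run against Theorem~\ref{thm mainthm}'' is precisely the point you miss. The remaining ingredients of your sketch (the conjugation argument reducing to class representatives, and the Klein-four analysis for a non-central $t$ following Case~2 of \cite[Proposition~4.6]{KoshitaniLassueur2020A}) do match the paper, but they only become available once the Brauer-construction identification above is established.
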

\begin{proof}
Set $M:=\Sc(G\times G,\Delta(P,\alpha,P))$ and $B=B_0(kG)$.  Further, for each subgroup $Q\le P$, set $B_Q:=B_0(kC_G(Q))$. Similar to \cite[Lemma~4.1]{KoshitaniLassueur2020A} one first shows that the following are equivalent: 

\smallskip
{\rm (i)} $M$ induces a stable Morita auto-equivalence of $B$.

\smallskip
{\rm (ii)} For every cyclic subgroup $Q\le P$ of order $p$, the bimodule $M(\Delta(\alpha(Q),\alpha,Q))$ induces a Morita equivalence between $B_{\alpha(Q)}$ and $B_Q$.

Now, since we can assume that $G$ is one of the groups listed in \cite[Theorem~1.1]{KoshitaniLassueur2020A}, it follows that $P$ has either one or two $G$-conjugacy classes of involutions. It suffices to show that for an involution $t\in P$, the bimodule $M\left(\langle \alpha(t),t\rangle\right)$ induces a Morita equivalence between $B_{\langle\alpha(t)\rangle}$ and $B_{\langle t\rangle}$.    

First let $z$ be an involution in $Z(P)$. Then $P$ is a Sylow $2$-subgroup of $C_G(z)$ and $C_G(\alpha(z))$ which are both $2$-nilpotent. So by Lemma~\ref{lem p-nilpotent}, the bimodule $$M_z:=\Sc\left(C_G(\alpha(z))\times C_{G}(z),\Delta(P,\alpha,P)\right)$$ induces a Morita equivalence between $B_{\langle\alpha(z)\rangle}$ and $B_{\langle z\rangle}$. By adopting the proof of \cite[Lemma~3.2]{KoshitaniLassueur2020A}, and noting that the fusion system $\calF_{\Delta(P,\alpha,P)}(G\times G)$ is saturated by Lemma~\ref{lem fusionlemma}, one shows that
\begin{align*}
M_z | M(\langle \alpha(z),z\rangle)\,.
\end{align*}
Note that the module $M$ is Brauer indecomposable. Indeed, every subgroup of $\Delta(P,\alpha,P)$ is of the form $\Delta(\alpha(Q),\alpha,Q)$ where $Q\le P$, and one has $C_{G\times G}(\Delta(\alpha(Q),\alpha,Q))=C_G(\alpha(Q))\times C_G(Q)$. As in the proof of \cite[Corollary~4.4]{KoshitaniLassueur2020A}, it follows that $C_{G\times G}(\Delta(\alpha(Q),\alpha,Q)$ is $2$-nilpotent if $Q\neq 1$, and hence $M$ is Brauer indecomposable by \cite[Theorem~1.3]{KoshitaniLassueur2020A}. This shows that
\begin{align*}
M_z = M(\langle \alpha(z),z\rangle)\,.
\end{align*}

{\em Case 1:} Assume that all involutions on $P$ are $G$-conjugate and let $t\in P$ be an involution. Then $t$ is conjugate to an involution $z\in Z(P)$, i.e., $t=z^g$ for some $g\in G$. Since $\alpha(t)$ is again an involution, we have $\alpha(t)=\alpha(z)^{g'}$ for some $g'\in G$, as well. So,
\begin{align*}
M\left(\langle \alpha(t),t\rangle\right)&=M\left(\langle \alpha(z),z\rangle^{(g',g)}\right)=M_z^{(g',g)}=\Sc(C_G(\alpha(z)^{g'})\times C_G(z^g),\Delta(P,\alpha,P)^{(g',g)})\\&
=\Sc(C_G(\alpha(t))\times C_G(t),\Delta(P^{g'},i_{g'^{-1}}\alpha i_g,P^g))\,.
\end{align*}
Now $P^{g'}$ and $P^g$ are Sylow $2$-subgroups of $C_G(\alpha(t))$ and $C_G(t)$, respectively. So by Lemma~\ref{lem p-nilpotent}, it follows that
\begin{align*}
M\left(\langle \alpha(t),t\rangle\right)=\Sc(C_G(\alpha(t))\times C_G(t),\Delta(P^{g'},i_{g'^{-1}}\alpha i_g,P^g))
\end{align*}
induces a Morita equivalence between $B_{\langle\alpha(t)\rangle}$ and $B_{\langle t\rangle}$.

{\em Case 2:} Assume now that $P$ has exactly two $G$-conjugacy classes of involutions. Let $t$ and $z$ be two non-conjugate involutions with $z\in Z(P)$. By the proof of \cite[Lemma~4.5]{KoshitaniLassueur2020A}, the group $\langle t\rangle$ is a fully $\calF_P(G)$-normalized subgroup of $P$. Hence, by Lemma~\ref{lem fusionlemma}, the group $\Delta(\langle \alpha(t)\rangle,\alpha,\langle t\rangle)$ is a fully $\calF_{\Delta(P,\alpha,P)}(G\times G)$-normalized subgroup of $\Delta(P,\alpha,P)$. Since also $M$ is Brauer indecomposable, by \cite[Theorem~1.3]{IshiokaKunugi2017} it follows that
\begin{align*}
M\left(\Delta(\langle \alpha(t)\rangle,\alpha,\langle t\rangle)\right)\cong \Sc\left(N_{G\times G}(\Delta(\langle \alpha(t)\rangle,\alpha,\langle t\rangle), N_{\Delta(P,\alpha,P}(\Delta(\langle \alpha(t)\rangle,\alpha,\langle t\rangle)))\right)\,.
\end{align*}
Since $|\Delta(\langle \alpha(t)\rangle,\alpha,\langle t\rangle)|=2$, one has
\begin{align*}
N_{G\times G}(\Delta(\langle \alpha(t)\rangle,\alpha,\langle t\rangle)=C_G(\alpha(t))\times C_G(t)
\end{align*}
and 
\begin{align*}
N_{\Delta(P,\alpha,P)}(\Delta(\langle \alpha(t)\rangle,\alpha,\langle t\rangle)=\Delta(\alpha(C_P(t)),\alpha,C_P(t)))\,.
\end{align*}
Therefore, one has
\begin{align*}
M\left(\Delta(\langle \alpha(t)\rangle,\alpha,\langle t\rangle)\right)\cong \Sc\left(C_G(\alpha(t))\times C_G(t),\Delta(\alpha(C_P(t)),\alpha,C_P(t)))\right)\,.
\end{align*}
Now, the proof in this case is similar to the proof of Case 2 in \cite[Proposition~4.6]{KoshitaniLassueur2020A}, with \cite[Lemma~4.5]{KoshitaniLassueur2020A} is replaced by the isomorphism above.
\end{proof}

{\em Proof of Theorem~\ref{thm mainthm}:} First assume that $P=D_{2^n}$ is a dihedral group. We can assume that $G$ is one of the groups listed in \cite[Theorem~1.1]{KoshitaniLassueur2020A}. By Proposition~\ref{prop stableMorita}, the bimodule $\Sc(G\times G,\Delta(P,\alpha,P))$ induces a stable Morita auto-equivalence of $B$. To show that this is indeed a Morita equivalence, one follows the steps in \cite[Section~5]{KoshitaniLassueur2020A} with \cite[Lemma~3.4]{KoshitaniLassueur2020A} is replaced by Lemma~\ref{lem stableMoritaimplications}. 

Now assume that $P=Q_{2^n}$ is a generalised quaternion group. We can assume that the group $G$ is one of the groups listed in \cite[Theorem~1.1(a)]{KoshitaniLassueur2020B}. Hence $Z:=Z(G)=Z(P)$ is a group of order $2$. Write $M:=\mathrm{Sc}(G\times G,\Delta(P,\alpha,P))$, $\overline{G}:=G/Z$, $\overline{P}:=P/Z$ and $\overline{M}:=k\overline{G}\otimes_{kG}M\otimes_{kG} k\overline{G}$. Note that the group $\overline{P}=D_{2^{n-1}}$ is dihedral and $\overline{G}$ is one of the groups listed in \cite[Theorem~1.1]{KoshitaniLassueur2020A}.  

Since $Z$ is the center of $P$, the map $\alpha$ induces a map $\overline{\alpha}:\overline{P}\to \overline{P}$. Moreover, again $Z$ is central in $G$, one has $\overline{\alpha}\in\Aut(\overline{P},\overline{\calF})$ where $\overline{\calF}=\calF_{\overline{P}}(\overline{G})$. Now we claim that $\overline{M}=\mathrm{Sc}(\overline{G}\times \overline{G}, \Delta(\overline{P},\overline{\alpha},\overline{P}))$ which is essentially \cite[Lemma~3.1]{KoshitaniLassueur2020B}. We include a short proof for the convenience of reader.

Since the group $\Delta(Z,\alpha,Z)$ is central in $G\times G$, it acts trivially on $\Ind_{\Delta(P,\alpha,P)}^{G\times G}k$ and hence on $M$.  One has
\begin{align*}
\overline{M}\cong M\otimes_{k(G\times H} k(\overline{G}\times \overline{H})& \mid \Ind_{\Delta(P,\alpha,P)}^{G\times G}k \otimes_{k(G\times H} k(\overline{G}\times \overline{H})\\&
\cong k\otimes_{\Delta(P,\alpha,P)} k(\overline{G}\times \overline{H})\\&
\cong k\otimes_{\Delta(\overline{P},\overline{\alpha},\overline{P})} k(\overline{G}\times \overline{H})=\Ind_{\Delta(\overline{P},\overline{\alpha},\overline{P})}^{\overline{G}\times \overline{H}} k
\end{align*}
as right $k(\overline{G}\times \overline{H})$-modules. This proves our claim. 

Now note that 
\begin{align*}
\Delta(P,\alpha,P)\cap ({1}\times G)=\Delta(P,\alpha,P)\cap (G\times {1})={1}
\end{align*}
and 
\begin{align*}
Z\times Z\le (Z\times {1})\Delta(P,\alpha,P)=({1}\times Z)\Delta(P,\alpha,P)
\end{align*}
since $(z,z')=(z\alpha(z')^{-1},1)(\alpha(z'),z')$, $(z,z')=(1,z'(\alpha^{-1}(z))^{-1})(z,\alpha^{-1}(z))$ and $(1,z)(\alpha(u),u)=(\alpha(z^{-1}),1)(\alpha(zu),zu)$.  Hence \cite[Lemma~10.2.11]{Rouquier98} applied replacing splendid complex with bimodule inducing splendid Morita equivalence, as in the proof of \cite[Lemma~3.2]{KoshitaniLassueur2020B}, implies that $M$ induces a splendid Morita auto-equivalence of $B$ if and only if $\overline{M}$ induces a splendid Morita auto-equivalence of $\overline{B}$. The result follows from the first part.
\qed

\centerline{\rule{5ex}{.1ex}}
\begin{flushleft}
\c{C}isil Karag{\"u}zel, Department of Mathematics, Bilkent University, 06800 Ankara, Turkey.\\
{\tt cisil@bilkent.edu.tr}\vspace{1ex}\\
Deniz Y\i lmaz, Department of Mathematics, Bilkent University, 06800 Ankara, Turkey.\\
{\tt d.yilmaz@bilkent.edu.tr}
\end{flushleft}
\end{document}